\newlength{\rulebreite}
\def\timesover#1#2#3{\ \xymatrix@1@=0pt@M=0pt{ _{#1}&\times&_{#2} \\& ^{#3}&}\ }
\def\otimesover#1#2#3{\ \xymatrix@1@=0pt@M=0pt{ _{#1}&\otimes&_{#2} \\& ^{#3}&}\ }
\theoremstyle{plain}
\newtheorem{thm}{Theorem}
\newtheorem{cor}[thm]{Corollary}
\theoremstyle{definition}
\newtheorem{claim}[thm]{Claim}
\numberwithin{thm}{section}
\numberwithin{equation}{section}
\newcommand{\ga}[2]{
\begin{gather}\label{#1}#2\end{gather} 
}
\newcommand{\surj}{\twoheadrightarrow}
\newcommand{\Spec}{{\rm Spec \,}}
\newcommand{\sX}{{\mathcal X}}
\newcommand{\sY}{{\mathcal Y}}
\newcommand{\sZ}{{\mathcal Z}}
\newcommand{\N}{{\mathbb N}}
\newcommand{\Q}{{\mathbb Q}}
\newcommand{\Z}{{\mathbb Z}}
\begin{document}

\title[Point]{ Coniveau over $\frak{p}$-adic fields and points over finite fields}
\author{H\'el\`ene Esnault}
\address{
Universit\"at Duisburg-Essen, Mathematik, 45117 Essen, Germany}
\email{esnault@uni-due.de}
\date{April 6, 2007}
\thanks{Partially supported by  the DFG Leibniz Preis}

\parindent0cm
\parskip5pt

\begin{abstract}
If the $\ell$-adic cohomology of a projective smooth variety,
defined over a $\frak{p}$-adic field $K$ with finite residue field $k$, is supported in codimension $\ge 1$, 
then any model over the ring of integers of $K$ has a $k$-rational point. 
\end{abstract}
\maketitle
{\bf Version fran\c{c}aise abr\'eg\'ee.}
Soit $X$ une vari\'et\'e projective et absolument irr\'eductible sur un corps local $K$.  Rappelons qu'un {\it mod\`ele} de $X/K$ sur l'anneau de valuation $R$ de $K$ est un morphisme 
$\sX\to \Spec R$ projectif et plat, tel que $(\sX\to \Spec R)\otimes K=(X\to \Spec K)$.  Nous consid\'erons la cohomologie $\ell$-adique $H^i(\bar X)$ \`a coefficients dans $\Q_\ell$.  Dire qu'elle est support\'ee en codimension 1 signifie que toute classe dans $H^i(\bar X)$ a une restriction nulle dans $H^i(\bar U)$, o\`u $U\subset X$ est un ouvert non vide. 
Le but de cette note est de prouver le th\'eor\`eme suivant.

{\bf Th\'eor\`eme:} {\it Soit} $X$ {\it une vari\'et\'e  projective lisse et absolument irr\'eductible sur un corps local} $K$ {\it de caract\'eristique 0 et \`a corps r\'esiduel fini} $k$. {\it On suppose que  la cohomologie} $\ell$-{\it adique} $H^i(\bar X)$ {\it est support\'ee en codimension} $\ge 1$
 {\it pour tout} $i\ge 1$.
 {\it Soit}  $\sX/R$ {\it un mod\`ele. } 
 {\it Alors il existe un morphisme projectif surjectif}  $\sigma: \sY\to \sX$ {\it de} $R$-{\it sch\'emas tel que} $|\sY(k)| \equiv 1$ {\it modulo} $|k|$. 

On en d\'eduit imm\'ediatement le corollaire suivant.\\
{\bf Corollaire:} {\it Sous les hypoth\`eses du th\'eor\`eme, tout mod\`ele} $\sX/R$ {\it poss\`ede  un point} $k$-{\it rationnel}.

Pour ce qui concerne l'existence du point $k$-rationnel, ceci affranchit  \cite[Theorem~1.1]{Ept}, (qui est vrai aussi si $K$ est de  caract\'eristique $p>0$),  de l'hypoth\`ese de r\'egularit\'e sur le choix du mod\`ele  $\sX$, qui \'etait utilis\'ee pour pouvoir appliquer le th\'eor\`eme de puret\'e de Gabber  \cite{Fu}. Pour ce faire, nous montrons que d'avoir des singularit\'es quotient est suffisant, de m\^eme que pour l'\'etude de l'application de sp\'ecialisation. Nous appliquons alors la version plus pr\'ecise du th\'eor\`eme de Jong ainsi qu'elle est expos\'ee dans 
\cite{deJ2}. 
\section{Introduction}
Let $X$ be a projective, absolutely irreducible  variety defined over a local field $K$ with finite residue field $k$. 
Recall that a {\it model} of $X/K$ on the valuation ring $R$ of $K$ is a flat projective morphism $\sX\to \Spec R$ such that 
$(\sX\to \Spec R)\otimes K=(X\to \Spec K)$. We  consider $\ell$-adic cohomology $H^i(\bar X)$ with $\Q_\ell$-coefficents. One defines the first coniveau level 
\ga{N}{N^1H^i(\bar X)=\{\alpha \in H^i(\bar X), \exists \ {\rm divisor} \ D\subset X \ {\rm s.t.} \ 
0=\alpha|_{X\setminus D}\in H^i(\overline{X\setminus D})\}.}
As $H^i(\bar X)$ is a  finite dimensional $\Q_\ell$-vector space, one has by localization
\ga{N2}{\exists D \subset X \ {\rm s.t.} \ N^1H^i(\bar X)= {\rm Im}\big(H^i_{\bar D}(\bar X)\to H^i\bar X)\big),}
where $D\subset X$ is a divisor. One says that $H^i\bar X)$ is {\it supported in codimension 1} if $N^1H^i(\bar X)=H^i(\bar X)$. This definition is general, but has good properties only if $X$ is irreducible and smooth or has only very mild singularities.

In \cite[Theorem~1.1]{Ept} it is shown that if $X/K$ is smooth, projective, absolutely irreducible over a local field $K$ with finite residue field $k$, and if $\ell$-adic cohomology $H^i(\bar X)$
is supported in codimension $\ge 1$ for all $i\ge 1$, then any regular model $\sX/R$ of $X/K$  has the property
\ga{1.1}{\sX(k)\equiv 1 \ {\rm mod} \ |k|.}

The purpose of this note is to drop the regularity assumption if $K$ has characteristic 0.

\begin{thm} \label{thm1.1}

Let $X$ be a smooth,  projective, absolutely irreducible variety defined over a local field $K$  of characteristic 0 with finite residue field $k$. Assume that $\ell$-adic cohomology $H^i(\bar X)$
is supported in codimension $\ge 1$ for all $i\ge 1$. Let $\sX$ be a model of $X$ over the ring of integers $R$ of $K$. Then  there is a 
projective surjective morphism $\sigma: \sY\to \sX$ of $R$-schemes  such that 
 $$|\sY(k)|\equiv 1 \ {\rm mod} \ |k|.$$ 
\end{thm}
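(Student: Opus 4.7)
The plan is to imitate the proof of~\cite[Theorem~1.1]{Ept}, which established the analogous conclusion under the additional assumption that $\sX$ is regular. In that argument, regularity was used only to invoke Gabber's absolute purity, by means of which the nearby cycle cohomology of the special fibre is identified, Galois-equivariantly, with the $\ell$-adic cohomology of the geometric generic fibre. To drop the regularity hypothesis I would replace $\sX$ by an auxiliary model $\sY$ with only tame quotient singularities and then re-run the argument of~\cite{Ept} on $\sY$.

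The first step is to apply the refined version of de~Jong's alteration theorem in~\cite{deJ2} to the flat projective $R$-scheme $\sX$. This produces a projective surjective $R$-morphism $\sigma:\sY\to\sX$ such that $\sY$ is flat over $R$ with smooth projective absolutely irreducible generic fibre $Y:=\sY_K$ birational to $X$, and such that $\sY$ has at worst tame quotient singularities (\'etale-locally of the form $\tilde\sY/G$ with $\tilde\sY$ regular over $R$, $|G|$ invertible in $R$, and strict normal crossings special fibre upstairs).

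Next I would transfer the coniveau hypothesis from $X$ to $Y$: since $Y\to X$ is a birational morphism of smooth projective $K$-varieties, its exceptional locus is a divisor $E\subset Y$, and the pullback $H^i(\bar X)\to H^i(\bar Y)$ is split injective with cokernel supported on $E$. Such classes lie by definition in $N^1H^i(\bar Y)$, so $N^1H^i(\bar Y)=H^i(\bar Y)$ for every $i\ge1$. The third step is to check that Gabber purity and the description of the nearby cycles complex $R\Psi\Q_\ell$ on which~\cite{Ept} rests extend from regular $R$-schemes to $R$-schemes with tame quotient singularities. Locally writing $\sY=\tilde\sY/G$ as above, the corresponding statements on $\tilde\sY$ descend to $\sY$ by taking $G$-invariants; tameness of $|G|$ makes this operation exact on $\Q_\ell$-coefficients and compatible with the action of $\Gal(\bar K/K)$, in particular with Frobenius.

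Finally, one runs the Grothendieck--Lefschetz trace formula on the special fibre together with proper base change to get
\[
|\sY(k)|\equiv\sum_{i\ge 0}(-1)^i\Tr\bigl(F\mid H^i(\overline{\sY_k},R\Psi\Q_\ell)\bigr)\pmod{q},\qquad q=|k|,
\]
and the extended purity of step three identifies the right-hand side, modulo~$q$, with $\sum(-1)^i\Tr(F\mid H^i(\bar Y))$. The coniveau hypothesis on $Y$ forces the Frobenius eigenvalues on $H^i(\bar Y)$ for $i\ge1$ to lie in $q\mb Z_\ell$, so only the $i=0$ term survives modulo~$q$, giving $|\sY(k)|\equiv 1\bmod q$. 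The main obstacle I expect is step three: although quotient singularities are morally ``smooth enough'', one must verify in a careful Frobenius-equivariant way that the nearby cycles formalism really descends under the tame finite group action, and it is precisely this verification that replaces the clean appeal to Gabber purity on a regular model used in~\cite{Ept}.
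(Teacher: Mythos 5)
Your overall strategy coincides with the paper's: alter via de~Jong, transfer the coniveau condition from $X$ to the new model by a trace argument, control Frobenius eigenvalues on the special fibre, and finish with the Grothendieck--Lefschetz trace formula. The technical execution, however, diverges at exactly the point where you yourself flag an ``obstacle'', and there it has a real gap.

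First, the paper does not try to extend Gabber purity or the nearby-cycles formalism to quotient singularities at all. De~Jong's \cite[Cor.~5.15]{deJ2} supplies a \emph{global} diagram $\sZ\xrightarrow{\pi}\sY\xrightarrow{\sigma}\sX$ with a finite group $G$ acting on $\sZ$ over $\sY$, where $\sigma$ is projective, surjective, birational, $\sY=\sZ/G$, and $\sZ$ is regular. Purity is applied to $\sZ$, and the needed statements on $\sY$ are obtained by taking $G$-invariants: $H^i_{\bar B}(\sY^u)=H^i_{\bar C}(\sZ^u)^G$ and $H^i(\bar Y)=H^i(\bar Z)^G$. Taking $G$-invariants of $\Q_\ell$-vector spaces is exact for \emph{any} finite $G$, so the tameness of the action and the invertibility of $|G|$ in $R$ that you posit are both unnecessary and, in fact, not part of de~Jong's conclusion. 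The specialization sequence \cite[3.6(6)]{DeWeII} is then written directly on $\sY$, which only requires $\sY$ to be proper over $R$, not regular. This replaces the delicate \'etale-local descent of $R\Psi$ under a finite group action that you identified as the hard step.

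Second, and more seriously, you assert that the generic fibre $Y=\sY_K$ is smooth and then invoke the integrality theorem \cite[Thm.~1.5(ii)]{Ept} directly on $Y$ to put the Frobenius eigenvalues of $H^i(\bar Y)$, $i\ge 1$, into $q\bar\Z$. De~Jong gives regularity of $\sZ$, hence smoothness of $Z=\sZ_K$ (because $K$ is perfect), but $Y=Z/G$ is merely a quotient of a smooth variety and can be singular; the integrality theorem needs smoothness. The paper's fix is one more step: from $N^1H^i(\bar Y)=H^i(\bar Y)$ (your Step~2) and the injectivity of $\pi^*$ (again $G$-invariants), one gets $H^i(\bar Y)\hookrightarrow N^1H^i(\bar Z)$, and then \cite[Thm.~1.5(ii)]{Ept} is applied to the \emph{smooth} $Z$. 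Note that this is precisely where the hypothesis $\Char K=0$ enters --- it is what upgrades ``$\sZ$ regular'' to ``$Z$ smooth'' --- a dependence your write-up does not make visible and which the paper itself singles out as the reason the result is not (yet) known in equal characteristic $p$.

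Finally, a small caution about your trace-formula display: $|\sY(k)|$ equals $\sum(-1)^i\Tr(F\mid H^i(\overline{\sY_k},\Q_\ell))$, not the alternating trace on the nearby-cycle cohomology $H^i(\overline{\sY_k},R\Psi\Q_\ell)\cong H^i(\bar Y)$; the paper relates the two via the exact sequence in which the difference is carried by $H^i_{\bar B}(\sY^u)$, and it is exactly this term that Claim~\ref{claim2.2} (purity on $\sZ$, descended by $G$-invariants) controls.
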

As an  immediate corollary, one obtains
\begin{cor} \label{cor1.2}
 Under the assumptions of the theorem, every model $\sX/R$ has a $k$-rational point. 
\end{cor}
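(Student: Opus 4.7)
The plan is to extract the corollary directly from Theorem~\ref{thm1.1}, whose full force is never actually needed: only the nonvanishing of $|\sY(k)|$ matters, not its residue modulo $|k|$. The strategy is therefore to invoke the theorem to produce a projective surjection $\sigma : \sY \to \sX$ of $R$-schemes with the stated congruence, observe that the congruence forces $\sY(k)$ to be nonempty, and then push a $k$-point of $\sY$ down to $\sX$.

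First I would apply Theorem~\ref{thm1.1} to the given model $\sX/R$. This yields a projective surjective morphism $\sigma : \sY \to \sX$ of $R$-schemes with $|\sY(k)| \equiv 1 \bmod |k|$. Since the residue field $k$ is finite with $|k| \geq 2$, the integer $1$ is nonzero modulo $|k|$, so in particular $|\sY(k)| \geq 1$; choose any $k$-rational point $y : \Spec k \to \sY$.

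Next I would produce the desired $k$-point on $\sX$ by composition: $\sigma \circ y : \Spec k \to \sX$ is a morphism over $R$, hence a $k$-rational point of $\sX$. No surjectivity argument on $k$-points is required, because every morphism $\Spec k \to \sX$ of $R$-schemes is, by definition, an element of $\sX(k)$.

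There is essentially no obstacle here: once Theorem~\ref{thm1.1} is granted, the corollary is immediate because the congruence $|\sY(k)| \equiv 1 \bmod |k|$ is strictly stronger than nonemptiness of $\sY(k)$, and $k$-points are functorial under morphisms of $R$-schemes. The only point worth underlining, for the reader, is that the regularity hypothesis on $\sX$ that appeared in \cite[Theorem~1.1]{Ept} has now been removed (in characteristic zero), so the corollary applies to an arbitrary model.
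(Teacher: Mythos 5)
Your argument is correct and is exactly what the paper intends: the paper states the corollary as "immediate" from Theorem~\ref{thm1.1}, and your proof simply spells out the one-line deduction (nonemptiness of $\sY(k)$ from the congruence, then pushing a $k$-point forward along $\sigma$).
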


The regularity of the model $\sX$ in the proof of \cite[Theorem~1.1]{Ept} (which is shown also when $K$ has characteristic $p>0$) was used to apply Gabber's purity theorem \cite{Fu}. We show that for the piece of regularity one needs, it is enough to have quotient singularities. Likewise, for the properties needed on the specialization map, quotient singularities are good enough. The more careful use of de Jong's  theorem as exposed in \cite{deJ2} allows then to conclude. 

{\it Acknowledegment:} This note relies on de Jong's fundamental alteration theorems. T. Saito suggested to us the use of them in the shape formulated in \cite{deJ2}. We thank him for this, and for many subsequent discussions on the subject. 
We exposed a weaker version of Theorem \ref{thm1.1}  at the conference in honor of S. Bloch in Toronto in March 2007. Discussions with him, A. Beilinson and L. Illusie contributed to simplify our original exposition. 
\section{Proof of Theorem \ref{thm1.1}}
Let $K$ be a local field of characteristc 0 with finite residue field $k$. Let $R\subset K$ be its valuation ring. 
Let $\sX\to \Spec R$ be an integral model of a projective variety $X\to \Spec K$. We do not assume here that $X$ is absolutely irreducible, nor do we assume that $X/K$ is smooth. Then by \cite[Corollary~5.15]{deJ2}, there is a diagram
\ga{2.1}{\xymatrix{\ar[drr] \sZ\ar[r]^{\pi} & \ar[dr] \sY \ar[r]^{\sigma}   & \sX \ar[d]\\
  & & \Spec R
}
}
and a finite group $G$ acting on $\sZ$ over $\sY$
with the properties
\begin{itemize}
\item[(i)] $\sZ\to \Spec R$ and $\sY\to \Spec R$ are  flat,
\item[(ii)] $\sigma$ is projective, surjective, and birational,
\item[(iii)] $\sY$ is the quotient of $\sZ$ by $G$,
\item[(iv)] $\sZ$ is regular. 
\end{itemize}
So $\sY\to \Spec R$ is not quite a model of $\sX\to \Spec K$, but is close to it. We show in the sequel that $\sigma$ in \eqref{2.1} does it in Theorem \ref{thm1.1}. 
Set
$$Y=\sY\otimes K, \ Z=\sZ\otimes K.$$

For an open $U\subset X$ let us set $Y_U=U\times_X Y, \ Z_U=U\times_X Z$. 

Let us assume now that $X/K$ is smooth. 
This implies that 
\ga{2.2}{ H^i(\bar U)\xrightarrow{\sigma^* \ {\rm inj}}  H^i(\overline{Y_U}).}
Moreover,
one 
has a trace map from $Y$ to $X$ 
\ga{2.4}{
\xymatrix{ H^i(\overline{Y_U})  \ar[rr]^{(\rm trace)_{Y/X}} & & H^i(\bar U)
}
}
which splits $\sigma^*$ in \eqref{2.2}. Let $i\ge 1$ and 
let $D\subset X$ be a divisor such that $H^i_{\bar D}(\bar X)\surj H^i(\bar X)$ and such that 
$\sigma|_{X\setminus D}: Y\setminus \sigma^{-1}(D)\to X\setminus D$ is an
 isomorphism.
Then \eqref{2.4} yields the commutative diagram
\ga{2.5}{\xymatrix{\ar[d]_{({\rm trace})_{Y/X}}H^i(\bar{Y}) \ar[r] &  
H^i(\overline{Y\setminus \sigma^{-1}(D)})\ar[d]^{=}\\
H^i(\bar X) \ar[r]^{0} & H^i(\overline{X\setminus D})
}
}
and we conclude
\ga{2.6}{X/K \ {\rm smooth}  \Longrightarrow N^1H^i(\bar X)=H^i(\bar X)\subset N^1H^i(\overline{Y})=H^i(\overline{Y}).}

We endow all schemes considered (which are   $R$-schemes) with the upper subscript $^u$ to indicate the base change $\otimes_R R^u$ or $\otimes_K K^u$, where $K^u\supset K$ is the maximal unramified extension, and $R^u\supset R$ is the normalization of $R$ in $K^u$. Likewise, we write $\overline{?}$
to indicate the base change $\otimes_R \bar R, \ \otimes_K \bar K, \ \otimes_k \bar k$, where $\bar K\supset K, \ \bar k\supset k$ are the algebraic closures and $\bar R\supset R$ is the normalization of $R$ in $\bar K$. 
We consider as in \cite[(2.1)]{Ept} the $F$-equivariant exact sequence (\cite[3.6(6)]{DeWeII})
\ga{2.7}{\ldots \to H^i_{\bar B}(\sY^u)\xrightarrow{\iota} H^i(\bar B)=H^i(\sY^u)\xrightarrow{sp^u} H^i(Y^u) \to \ldots, 
}
where  $F \in {\rm Gal}(\bar k/k)$ is the geometric Frobenius, and $B=\sY\otimes k$.

One has
\begin{claim} \label{claim2.1}
 The eigenvalues of the geometric Frobenius $F\in {\rm Gal}(\bar k/k)$ acting on $H^i(X^u)$ and on $H^i( Y^u)$ lie in $q\cdot \bar{\Z}$ for all $i\ge 1$. 
\end{claim}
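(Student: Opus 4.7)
\smallskip
\noindent\textbf{Proof plan.} The plan is to reduce the statement on $H^i(X^u)$ to an analogous one on $H^i(\bar X)$ using the inertia action, and then to deduce the $q$-divisibility of Frobenius eigenvalues on $H^i(\bar X)$ from the coniveau hypothesis via de Jong's alteration applied to the divisor that witnesses it. The same strategy then handles $Y$ using \eqref{2.6}.

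The inertia group $I = \Gal(\bar K/K^u)$ has $\ell$-adic cohomological dimension one: its wild part is pro-$p$ with $p \neq \ell$, and its tame quotient is $\hat\Z^{(p)}(1)$. The Hochschild--Serre spectral sequence for $\bar X \to X^u$ thus collapses into the short exact sequence of $\Gal(\bar k/k)$-modules
\[
0 \longrightarrow H^{i-1}(\bar X)_I(-1) \longrightarrow H^i(X^u) \longrightarrow H^i(\bar X)^I \longrightarrow 0,
\]
using the identification $H^1(I,V) \cong V_I(-1)$ for any continuous $\ell$-adic $I$-representation $V$, and likewise for $Y$. Frobenius acts on $H^j(\bar X)$ by algebraic integers (Deligne's integrality, via de Jong's alteration applied to the smooth projective $X/K$), so the sub $H^{i-1}(\bar X)_I(-1)$ contributes only eigenvalues in $q\bar\Z$, the factor $q$ coming from the Tate twist. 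The claim on $X^u$ therefore reduces to showing that the Frobenius eigenvalues on $H^i(\bar X)$ lie in $q\bar\Z$ for $i\geq 1$.

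For this, pick a divisor $D \subset X$ with $H^i_{\bar D}(\bar X) \twoheadrightarrow H^i(\bar X)$, which exists by the coniveau hypothesis together with \eqref{N2}. De Jong's alteration provides a smooth projective $\tilde D$ and a proper, generically \'etale map $f\colon \tilde D \to D$ of some degree $n$. Writing $g\colon \tilde D \to X$ for the composition with $D \hookrightarrow X$, a proper map of smooth projective $K$-varieties of relative codimension one, Poincar\'e duality provides a Galois-equivariant Gysin morphism
\[
g_* \colon H^{i-2}(\bar{\tilde D})(-1) \longrightarrow H^i(\bar X).
\]
The identity $f_* f^* = n\cdot\id$ on $H^*(\bar D, \Q_\ell)$ (valid in $\Q_\ell$-coefficients via the decomposition theorem for proper maps) shows that $f_*$ is surjective, and the Gysin map $H^{i-2}(\bar D)(-1) \to H^i_{\bar D}(\bar X) \to H^i(\bar X)$ surjects onto $N^1 H^i(\bar X) = H^i(\bar X)$; hence so does $g_*$. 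The Frobenius eigenvalues on the source are $q$ times algebraic integers (Deligne integrality on $\tilde D$ plus the twist), so the Frobenius eigenvalues on $H^i(\bar X)$ lie in $q\bar\Z$.

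The same argument applies to $Y$: \eqref{2.6} provides $H^i(\bar Y) = N^1 H^i(\bar Y)$, and since $K$ has characteristic zero, $Y = Z/G$ has only finite quotient singularities with $|G|$ invertible in $\Q_\ell$. Therefore $H^*(\bar Y, \Q_\ell) = H^*(\bar Z, \Q_\ell)^G$ and Poincar\'e duality for $Y$ holds with $\Q_\ell$-coefficients, validating the Gysin step on $Y$. The main obstacle is justifying this Gysin step when the divisor $D$ is singular (so the naive isomorphism $H^{i-2}(\bar D)(-1) \cong H^i_{\bar D}(\bar X)$ fails) and when the target $Y$ itself is singular; both are handled by working with the composite alteration $g$ directly rather than factoring through $D$, and by invoking that $Y$ is a rational homology manifold.
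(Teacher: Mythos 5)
The paper's own proof of Claim~\ref{claim2.1} is very short: for $H^i(X^u)$ it is a direct citation of \cite[Theorem~1.5(ii)]{Ept}, and for $H^i(Y^u)$ it observes $H^i(\bar Y)=H^i(\bar Z)^G$, so $\pi^*$ is injective, and \eqref{2.6} gives $H^i(\bar Y)\hookrightarrow N^1H^i(\bar Z)$; since $\sZ$ is regular and $\Char K=0$, $Z$ is \emph{smooth} projective over $K$ and the cited theorem applies once more. Your proposal instead tries to reprove the cited theorem from scratch, via Hochschild--Serre for the inertia group plus a Gysin-map argument, and this is where the gaps appear.

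First, the Hochschild--Serre reduction is circular as written. After splitting $H^i(X^u)$ into $H^{i-1}(\bar X)_I(-1)$ and $H^i(\bar X)^I$, you invoke ``Deligne's integrality'' for the Frobenius action on $H^j(\bar X)$. But $F\in\Gal(\bar k/k)$ does not act canonically on $H^j(\bar X)$: one needs to choose a lift in $\Gal(\bar K/K)$, and the integrality of its eigenvalues on the full $H^j(\bar X)$ is \emph{not} the content of Deligne's theorem (which concerns varieties over finite fields, or, in \cite[Theorem~1.5(i)]{Ept}, the groups $H^j(X^u)$). Establishing integrality on $H^{i-1}(\bar X)_I(-1)\subset H^i(X^u)$ is essentially the same difficulty as the statement you are trying to prove, so nothing has been gained. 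The published proof of \cite[Theorem~1.5(ii)]{Ept} does not proceed this way.

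Second, the Gysin step is not justified. You want $g_*\colon H^{i-2}(\bar{\tilde D})(-1)\to H^i(\bar X)$ to surject onto $N^1H^i(\bar X)=\mathrm{Im}\big(H^i_{\bar D}(\bar X)\to H^i(\bar X)\big)$. Because $D$ is singular, there is no Gysin isomorphism $H^{i-2}(\bar D)(-1)\cong H^i_{\bar D}(\bar X)$, and the identity $f_*f^*=n\cdot\mathrm{id}$ lives on $H^*(\bar D)$, not on $H^i_{\bar D}(\bar X)$ (which is dual to Borel--Moore homology of $D$, not to cohomology of $D$). So the surjection of $g_*$ onto the image of local cohomology does not follow from what you wrote; you acknowledge the obstacle but do not resolve it. This is precisely the delicate point that the cited theorem takes care of (the relevant statement about $N^1$ being hit by Gysin maps from smooth sources is nontrivial and is the content of Deligne's appendix to \cite{Ept}).

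Third, for $Y$ you reach for the rational-homology-manifold property of $Y=Z/G$ to run a Gysin argument on $Y$ itself. This is unnecessary and misses the paper's cleaner route: since $H^i(\bar Y)=H^i(\bar Z)^G$ injects into $N^1H^i(\bar Z)$ and $Z$ is genuinely smooth (because $\sZ$ is regular and $\Char K=0$), everything reduces to the smooth projective variety $Z$, and no duality on the singular $Y$ is needed. That reduction is the actual content of the proof of the claim for $Y$, and it is absent from your proposal.
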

\begin{proof}
For $H^i(X^u)$, this is \cite[Theorem~1.5(ii)]{Ept}. 
One has $H^i(\bar Y)=H^i(\bar Z)^G$, thus in particular, $\pi^*: H^i(\bar  Y)\to H^i(\bar Z)$ is injective. By \eqref{2.6} one has
\ga{2.3}{H^i(\bar Y)\xrightarrow{\pi^* \ {\rm inj}} N^1H^i(\bar Z).}
Since $K$ has characteristic 0, and $\sZ$ is regular by (iv), $Z$ is smooth.
Thus we can apply again \cite[Theorem~1.5(ii)]{Ept}. This finishes the proof.

\end{proof}
\begin{claim}\label{claim2.2}
 The eigenvalues of the geometric Frobenius $F\in {\rm Gal}(\bar k/k)$  acting on
$\iota(H^i_{\bar B}(\sY^u))\subset H^i(\bar B)$ lie in $q\cdot \bar{\Z}$ for all $i\ge 1$.
\end{claim}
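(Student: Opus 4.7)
The plan is to reduce the statement from the singular quotient $\sY$ to the regular cover $\sZ$ via the finite quotient map $\pi$, and then to invoke Gabber's absolute cohomological purity on $\sZ^u$. Purity furnishes the Tate twist $(-1)$ in local cohomology along the divisor $\overline{B_\sZ}$, which is precisely what produces the factor of $q$ in the Frobenius eigenvalues. This is the same mechanism used in \cite[Theorem~1.1]{Ept} for a regular model, applied here to the regular cover supplied by \cite[Corollary~5.15]{deJ2}.

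Set $B_\sZ = \sZ \otimes_R k$, so $\pi$ restricts to a finite surjection $B_\sZ \to B$ realizing $B$ as the $G$-quotient of $B_\sZ$. Since $\pi$ is finite and we take $\Q_\ell$-coefficients, the $G$-averaging projector yields canonical Frobenius-equivariant isomorphisms
\eq{c22a}{H^i(\bar B) \cong H^i(\overline{B_\sZ})^G, \qquad H^i_{\bar B}(\sY^u) \cong H^i_{\overline{B_\sZ}}(\sZ^u)^G,}
compatible with the map $\iota$, which becomes the restriction to $G$-invariants of the analogous map $\iota_\sZ : H^i_{\overline{B_\sZ}}(\sZ^u) \to H^i(\overline{B_\sZ})$ for $\sZ$. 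Since $G$-invariants form an $F$-stable subspace, it suffices to prove that the eigenvalues of $F$ on $\iota_\sZ(H^i_{\overline{B_\sZ}}(\sZ^u))$ lie in $q\cdot \bar\Z$.

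By property (iv) the scheme $\sZ$ is regular; regularity is preserved by the ind-\'etale base change $R\to R^u$, so $\sZ^u$ is regular and $\overline{B_\sZ}\subset \sZ^u$ is a divisor. Gabber's purity theorem \cite{Fu} thus applies, and the associated Gysin / weight spectral sequence presents $H^i_{\overline{B_\sZ}}(\sZ^u)$ as iterated extensions of Frobenius modules of the form $H^{\ast}(\textrm{stratum of } \overline{B_\sZ})(-1-\ast)$, each carrying an explicit Tate twist of at least $(-1)$. By Deligne's Weil II the $F$-eigenvalues on $\ell$-adic cohomology of a proper $\bar k$-scheme are algebraic integers; the $(-1)$ twist multiplies them by $q$, placing every $F$-eigenvalue on $H^i_{\overline{B_\sZ}}(\sZ^u)$, and hence on $\iota_\sZ(H^i_{\overline{B_\sZ}}(\sZ^u))$, in $q\bar\Z$. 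Via \eqref{c22a} the same bound then passes to $\iota(H^i_{\bar B}(\sY^u))$, which is the claim.

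The main obstacle is the purity bookkeeping: one must verify that the Frobenius-equivariant Gysin sequence for the closed immersion $\overline{B_\sZ}\hookrightarrow \sZ^u$, with ambient scheme regular but divisor possibly singular, carries the Tate twist $(-1)$ as asserted; this is precisely the content of \cite{Fu}. Everything else is formal descent through the $G$-action on the cover provided by \cite[Corollary~5.15]{deJ2}, whose role is exactly to make the regularity assumption from \cite[Theorem~1.1]{Ept} dispensable at the level of $\sX$.
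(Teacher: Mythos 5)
Your proof follows exactly the same route as the paper's: identify $H^i_{\bar B}(\sY^u)$ with the $G$-invariants $H^i_{\bar C}(\sZ^u)^G$ (your $B_\sZ$ is the paper's $C=\pi^{-1}(B)$) using property (iii), then use the regularity of $\sZ$ from (iv) to invoke Gabber's absolute purity. The only difference is that the paper compresses the purity-plus-integrality step into a citation of \cite[Theorem~1.4]{Ept}, whereas you unfold its mechanism (Gysin spectral sequence, Tate twist, Weil II integrality); this is a difference of exposition, not of approach, and your argument is correct.
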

\begin{proof}
 By (iii), one has $H^i_{\bar B}(\sY^u)=H^i_{\bar C}(\sZ^u)^G\subset H^i_{\bar C}(\sZ^u)$, where $C=\pi^{-1}(B)$. Since by (iv), $\sZ$ is regular, we can apply \cite[Theorem~1,4]{Ept}, which is a consequence of Gabber's purity theorem \cite{Fu}, to conlude. 

\end{proof}

\begin{proof}[Proof of Theorem \ref{thm1.1}]
Claims \ref{claim2.1} and \ref{claim2.2} together with \eqref{2.7} show that the eigenvalues of $F$ acting on $H^i(\bar{B})$ lie in $q\cdot \bar{\Z}$ for all $ i\ge 1$.

We apply the Lefschetz trace formula $|B(k)|={\rm Tr}F|H^*(\bar B)$. As $B$ is absolutely connected and defined over $k$, $F|H^0(\bar B)={\rm Identity}$.  By the discussion, one has  $|B(k)| \in \N\cap (1+q\cdot \bar{\Z})\subset 1+q\cdot \Z$.  
\end{proof}
\section{Remarks}
Starting from Theorem \ref{thm1.1}, and Corollary \ref{cor1.2}, we may ask what happens if $K$ has equal characteristic $p>0$ and whether or not the congruence of the theorem is true on all models. We have no counter-examples for either question. What $K$ is concerned, characteristic 0 is used in 
the proof of Claim \ref{claim2.1}: if $K$ has characteristic $p>0$, we only know that $Z$ is regular, thus we can't apply immediately \cite[Theorem~1.5(ii)]{Ept}. Going up to a strict semi-stable model does not help as for this, one has to ramify $R$ and one loses regularity of $\sZ$ and $Z$. What the congruence is concerned, instead of going to one birational model $\sY$ (or birational up to some inseprable extension in characteristic $p>0$), one should go up to a hypercover built out of such $\sY$. In doing Deligne's construction of hypercovers with resolutions of singularities being replaced by de Jong's morphisms of the type $\sigma$ in \eqref{2.1}, one creates components which do not dominate $\sX$, the cohomology of which is very hard to control. So one perhaps loses the coniveau property.

\bibliographystyle{plain}
\renewcommand\refname{References}

\end{document}